\algrenewcommand\alglinenumber[1]{{\sffamily\footnotesize#1:}}
    \let\leq\leqslant
    \let\geq\geqslant
    \let\emptyset\varnothing
\DeclareMathOperator{\rank}{rank}
\let\leq\leqslant
\let\geq\geqslant
\let\emptyset\varnothing
\newcommand{\calA}{\ensuremath{\mathcal{A}}}
\newcommand{\calE}{\ensuremath{\mathcal{E}}}
\newcommand{\calM}{\ensuremath{\mathcal{M}}}
\newcommand{\calS}{\ensuremath{\mathcal{S}}}
\newcommand{\hatx}{\ensuremath{\hat{x}}}
\newcommand{\bmat}{\begin{matrix}}
\newcommand{\emat}{\end{matrix}}
\newcommand{\bbm}{\begin{bmatrix}}
\newcommand{\ebm}{\end{bmatrix}}
\newcommand{\bbma}{\begin{bmatrix*}[r]}
\newcommand{\ebma}{\end{bmatrix*}}
\newcommand{\bpm}{\begin{pmatrix}}
\newcommand{\epm}{\end{pmatrix}}
\newcommand{\bvm}{\begin{vmatrix}}
\newcommand{\evm}{\end{vmatrix}}
\newcommand{\bse}{\begin{subequations}}
\newcommand{\ese}{\end{subequations}}
\newcommand{\beq}{\begin{equation}}
\newcommand{\eeq}{\end{equation}}
\newcommand{\ben}{\renewcommand{\labelenumi}{\arabic{enumi}.}
\renewcommand{\theenumi}{\arabic{enumi}}\begin{enumerate}}
\newcommand{\een}{\end{enumerate}}
\newcommand{\bena}{\renewcommand{\labelenumi}{\alph{enumi}.}
\renewcommand{\theenumi}{\alph{enumi}}\begin{enumerate}}
\newcommand{\eena}{\end{enumerate}}
\newcommand{\bit}{\begin{itemize}}
\newcommand{\eit}{\end{itemize}}
\newcommand{\bthe}{\begin{theorem}}
\newcommand{\ethe}{\end{theorem}}
\newcommand{\blem}{\begin{lemma}}
\newcommand{\elem}{\end{lemma}}
\newcommand{\bprop}{\begin{proposition}}
\newcommand{\eprop}{\end{proposition}}
\newcommand{\bex}{\begin{example}}
\newcommand{\eex}{\end{example}}
\newcommand{\bas}{\begin{assumption}}
\newcommand{\eas}{\end{assumption}}
\newcommand{\bre}{\begin{remark}}
\newcommand{\ere}{\end{remark}}
\newcommand{\bcor}{\begin{corollary}}
\newcommand{\ecor}{\end{corollary}}
\newcommand{\bdfn}{\begin{definition}}
\newcommand{\edfn}{\end{definition}}
\newcommand{\bcon}{\begin{conjecture}}
\newcommand{\econ}{\end{conjecture}}
\newcommand{\inv}{\ensuremath{^{-1}}}
\newcommand{\R}{\ensuremath{\mathbb R}}
\newcommand{\C}{\ensuremath{\mathbb C}}
\newcommand{\Z}{\ensuremath{\mathbb Z}}
\newcommand{\qand}{\quad\text{ and }\quad}
\newcommand{\Rnn}{\R^{n\times n}}
\newcommand{\Rnm}{\R^{n\times m}}
\newcommand{\Rpn}{\R^{p\times n}}
\newcommand{\Rpm}{\R^{p\times m}}
\newcommand{\ta}{A_{\mathrm{true}}}
\newcommand{\tb}{B_{\mathrm{true}}}
\newcommand{\tc}{C_{\mathrm{true}}}
\newcommand{\td}{D_{\mathrm{true}}}
\newcommand{\du}[2]{u_{[#1,#2]}}
\newcommand{\dut}[1]{u_{[0,#1]}}
\newcommand{\dy}[2]{y_{[#1,#2]}}
\newcommand{\dyt}[1]{y_{[0,#1]}}
\newcommand{\dx}[2]{x_{[#1,#2]}}
\newcommand{\dxt}[1]{x_{[0,#1]}}
\newcommand{\dxtp}[1]{x_{[1,#1]}}
\newcommand{\dhxf}[2]{\hatx_{[0,T]}}
\newcommand{\nt}{n_{\mathrm{true}}}
\newcommand{\lt}{\ell_{\mathrm{true}}}
\newcommand{\nmt}{n_{\mathrm{min},t}}
\newcommand{\lmt}{\ell_{\mathrm{min},t}}
\newcommand{\syt}[1]{\calE_t(#1)}
\newcommand{\sylt}[2]{\calE_t(#1,#2)}
\newcommand{\sa}{\left[\begin{smallmatrix} A & B \\ C & D\end{smallmatrix}\right]}
\newcommand{\sai}{\left[\begin{smallmatrix} A_i & B_i \\ C_i & D_i\end{smallmatrix}\right]}
\newcommand{\struebig}{\left[\begin{array}{c|c} \ta & \tb \\\hline \tc & \td\end{array}\right]}
\newcommand{\noi}{\noindent}
\DeclareMathOperator{\lk}{lker}
\newcounter{todocounter}
\newtheorem{theorem}{Theorem}
\newtheorem{lemma}[theorem]{Lemma}
\newtheorem{assumption}[theorem]{Assumption}
\newtheorem{corollary}[theorem]{Corollary}
\newtheorem{remark}[theorem]{Remark}
\newcommand{\brem}{\begin{remark}}
\newcommand{\erem}{\end{remark}}
\newtheorem{definition}[theorem]{Definition}
\newtheorem{example}[theorem]{Example}
\newtheorem{proposition}[theorem]{Proposition}
\newtheorem{problem}[]{Problem}
\newtheorem{conjecture}[theorem]{Conjecture}
\newcommand{\benabc}{\begin{enumerate}[label={\em(\alph*)}, ref=(\alph*)]}
\newcommand{\beni}{\begin{enumerate}[label={\em(\roman*)}, ref=(\roman*)]}
\newcommand{\benabcrm}{\begin{enumerate}[label=(\alph*), ref=(\alph*)]}
\journal{Systems and Control Letters}
\begin{document}
\setcounter{MaxMatrixCols}{20}

\begin{frontmatter}

\title{The shortest experiment for linear system identification}

\author[1]{M.K. Camlibel}
\author[1]{H.J. van Waarde\fnref{funding}}
\affiliation[1]{organization={Bernoulli Institute for Mathematics, Computer Science and Artificial Intelligence, University of Groningen},
            country={The Netherlands},
            email={ (m.k.camlibel@rug.nl, h.j.van.waarde@rug.nl)}}
\fntext[funding]{Henk van Waarde acknowledges financial support by the Dutch Research Council (NWO) under the Talent Programme Veni Agreement (VI.Veni.22.335).}
\author[2]{P. Rapisarda} 

%% Author affiliation

            \affiliation[2]{organization={School of Electronics and Computer Science, University of Southampton},
            country={United Kingdom},
            email={ (pr3@ecs.soton.ac.uk)}}

\begin{abstract}

This paper is concerned with the following problem: given an upper bound of the state-space dimension and lag of a linear time-invariant system, design a sequence of inputs so that the system dynamics can be recovered from the resulting input-output data. As our main result we propose a new online experiment design method, meaning that the selection of the inputs is iterative and guided by data samples collected in the past. We show that this approach leads to the shortest possible experiments for linear system identification. In terms of sample complexity, the proposed method outperforms offline methods based on persistency of excitation as well as existing online experiment design methods.
\end{abstract}

%%Graphical abstract
%\begin{graphicalabstract}
%\includegraphics{grabs}
%\end{graphicalabstract}

%%Research highlights
%\begin{highlights}
%\item Expression for the smallest number of samples $T$ required for linear system identification.
%\item Online experiment design method to generate experiments of length exactly $T$.
%\end{highlights}

%% Keywords
\begin{keyword}

system identification, experiment design, linear systems

\end{keyword}

\end{frontmatter}

%% Add \usepackage{lineno} before \begin{document} and uncomment 
%% following line to enable line numbers
%% \linenumbers

%% main text
%%

%% Use \section commands to start a section
\section{Introduction}

\noindent 
\emph{Background}: In the context of system identification, \emph{experiment design} is concerned with the selection of inputs of a dynamical system in such a way that the resulting input-output data contain sufficient information about the system dynamics \citep{Goodwin1977}. Experiment design is a classical topic that has been investigated from different angles throughout the years \citep{Gevers1986,Jansson2005,Bombois2006,Iannelli2021}. 

An experiment design result that has recently been popularized is the so-called \emph{fundamental lemma} by Willems and his coauthors \cite{Willems05}. Roughly speaking, the result says that the dynamics of a linear time-invariant system can be uniquely identified from input-output data if the input data are chosen to be sufficiently persistently exciting. The fundamental lemma also provides a parameterization of all finite trajectories of the system, in terms of a data Hankel matrix. This parameterization has been applied in several recent data-driven analysis and control techniques ranging from simulation and linear quadratic regulation \cite{Markovsky2008} to predictive control \cite{Coulson19,Berberich21}, stabilization \cite{DePerzik20} and dissipativity analysis \cite{Koch2022}.

The recent interest in data-driven control has also led to extensions of the fundamental lemma itself. Its original proof was presented in the language of behavioral theory; an alternative proof for state space systems was provided in \cite{Waarde20}. Generalizations to uncontrollable systems are presented in \cite{Markovsky2023,Yue2021,Mishra2021} and extensions to continuous-time systems in \cite{Lopez2022,Rapisarda2023,Rapisarda2023b,Lopez2024}. Robust/quantitative versions are explored in \cite{Coulson2022,Berberich2023,Coulson2023} while frequency domain formulations have been considered in \cite{Ferizbegovic2021,Meijer2023}. Furthermore, the fundamental lemma has been generalized to various other model classes such as descriptor systems \cite{Schmitz2022}, flat nonlinear systems \cite{Alsalti2021}, linear
parameter-varying systems \cite{Verhoek2021}, and stochastic ones \cite{Faulwasser2023}.

The assumption of persistency of excitation, central to the fundamental lemma, imposes a lower bound on the required number of data samples for system identification. This lower bound is conservative in the sense that system identification is possible using less data, as long as certain conditions are met. Until recently, the precise conditions enabling system identification were still missing from the literature. However, in \cite{CamlibelRapisarda2024}, necessary and sufficient conditions are provided under which the data contain sufficient information for system identification, assuming minimality of the data-generating system and upper (and lower) bounds on its lag and state-space dimension. As is quite reminiscent of the subspace identification literature \cite{vanOverschee1996,Moonen1989}, these conditions involve rank properties of data Hankel matrices. However, a truly remarkable feat of \cite{CamlibelRapisarda2024} is that the \emph{depth} of these Hankel matrices is not given a priori but is \emph{determined from the data}. 

In this paper, we will build on the framework of \cite{CamlibelRapisarda2024}. However, unlike \cite{CamlibelRapisarda2024} that focused on analyzing informativity of given data sets, the purpose of this paper is to \emph{design} experiments that are informative for system identification. The main questions of this paper are therefore as follows: \vspace{-4mm}

\begin{center}
\fbox{\begin{minipage}{\textwidth}
What is the shortest possible sequence of input-output data that enables linear system identification? \\
And, if possible, how can we design the inputs of the system in order to generate such a shortest experiment? 
\end{minipage}}
\end{center} 

Apart from being of theoretical interest, these questions are also highly relevant for control applications. In fact, the computation time of, for example, data-driven predictive control scales cubically with the data length \cite{Coulson19,Fabiani2021}. Working with shorter yet informative experiments reduces the computational burden of these methods. 

\vspace{2mm}
\noindent
\emph{Contributions}: The main contributions of the paper are as follows: 
\vspace{-1mm}
\begin{itemize}
\item We assume that the unknown data-generating system is minimal and its lag and state-space dimension are upper bounded by $L$ and $N$. Then, based on \cite{CamlibelRapisarda2024}, we formulate a lower bound $T$ on the data length of any experiment that is informative for system identification (see \eqref{defT}). \vspace{-1mm}
\item We propose the experiment design method \textsc{OnlineExperiment}$(L,N)$. This procedure designs the inputs \emph{online}, i.e., on the basis of past input-output samples.  In Theorem~\ref{t:mainresult} we prove that this method leads to informative experiments of length $T$.
\end{itemize}

A remarkable outcome of this paper is the fact that experiments can be designed of length \emph{precisely equal to the lower bound $T$}. Interestingly, this number of samples $T$ depends on the unknown system and is thus not given a priori. It is revealed after the experiment design algorithm terminates. 

\vspace{2mm}
\noindent
\emph{Relation to previous work}: The online experiment design method of this paper always requires less samples than the persistency of excitation requirement of the fundamental lemma. In many cases, there is a substantial difference in number of data samples. We provide an example of this in Section~\ref{sec:comparison}. The online experiment design method of \cite{Waarde21} keeps the depth of the data Hankel matrix \emph{fixed} during the operation of the algorithm. In contrast to \cite{Waarde21}, our approach \emph{adapts the depth} of the Hankel matrix during operation of the algorithm. The rationale is that the ``correct" depth of the Hankel matrix for the necessary and sufficient conditions of \cite{CamlibelRapisarda2024} is \emph{a priori unknown}. We show in Section~\ref{sec:comparison} that our method outperforms \cite{Waarde21} in some situations, depending on the given $L$ and $N$. In other cases, our results prove that the approach of \cite{Waarde21} leads to the shortest experiments for linear system identification.
 
\vspace{2mm}
\noindent
\emph{Outline}: In Section~\ref{sec:notation} we treat preliminaries. In Section~\ref{sec:fl} we recap the fundamental lemma. In Section~\ref{sec:inf} we recall the definition and characterization of \emph{informativity for system identification} \citep{CamlibelRapisarda2024}. Subsequently, in Section~\ref{sec:problemstatement} we formalize the problem and in Section~\ref{sec:experimentdesign} we present our main results. In Section~\ref{sec:example} we provide examples, while Section~\ref{sec:comparison} contains a comparison to the state-of-the-art. Finally, in Section~\ref{sec:conclusions} we conclude the paper.

\section{Notation and preliminaries}\label{sec:notation}

We denote the $n\times n$ identity matrix by $I_n$ and the $m\times n$ zero matrix by $0_{m,n}$. The column vector containing $n$ zeros is denoted by $0_n$. For partitioned matrices containing zero and/or identity submatrices, we do not indicate the sizes of blocks that follow from their positions. The left kernel of a matrix $M \in \mathbb{R}^{m \times n}$ is defined as $\lk M := \{x \in \mathbb{R}^m \mid x^\top M = 0\}$.

A subset $\calA \subseteq \mathbb{R}^n$ is called \emph{affine} if it can be expressed as $\calA = \{x\} + \calS$ where $x \in \mathbb{R}^n$ and $\calS \subseteq \mathbb{R}^n$ is a subspace. The dimension of $\calA$ is defined as the dimension of $\calS$.

\vspace{-2pt}
\subsection{Void matrices}\vspace{-2pt}
 A {\em void matrix\/} is a matrix with zero rows and/or zero columns. We will use the notation $0_{n,0}$ and $0_{0,m}$ to denote, respectively, $n\times 0$ and $0\times m$ void matrices. All matrix operations extend to void matrices in a natural manner. In particular, if $M$ and $N$ are, respectively $p\times q$ and $q\times r$ matrices, $MN$ is a $p\times r$ void matrix if $p=0$ or $r=0$ and $MN=0_{p,r}$ if $p,r\geq 1$ and $q=0$. In addition, the rank of a void matrix is zero.
\vspace{-2pt}
\subsection{Integer intervals and Hankel matrices}\vspace{-2pt}
Given $i,j\in\Z$ with $i\leq j$, we write $[i,j]$ to denote the ordered set of all integers between $i$ and $j$, including both $i$ and $j$. By convention, $[i,j]=\emptyset$ if $i>j$. 

Let $i,j\in\Z$ with $i\leq j$. Also, let $f(i),f(i+1),\dots,f(j) \in \R^n$ be a sequence of vectors. We define
$$
f_{[i,j]}:=\bbm f(i)&f(i+1)&\cdots& f(j)\ebm\in\R^{n\times (j-i+1)}.
$$
Also, for $0 \leq k \leq j-i$ the \emph{Hankel matrix of $k+1$ block rows associated with $f_{[i,j]}$} is defined by:
\begin{equation*}
H_k(f_{[i,j]})=\bbm f(i)&\cdots&f(j-k)\\
\vdots&&\vdots\\
f(i+k)&\cdots&f(j)\ebm.
%\in\R^{(k+1)g\times(j-k-i+1)}.
\end{equation*}
We say that $f_{[i,j]}$ is \emph{persistently exciting of order $k+1$} if $H_k(f_{[i,j]})$ has full row rank. We stress that $H_k$ has $k+1$ block rows, which is slightly different from the notation in most of the literature, but adopted here to be consistent with the paper \cite{CamlibelRapisarda2024}.  

\subsection{Input-state-output systems}
\label{s:isosystems}
Throughout the paper, we work with linear discrete-time input-state-output systems of the form
\bse\label{eq: generic linear system}
\begin{align}
\bm x(t+1)&=A \bm x(t)+B\bm u(t)\\
\bm y(t)&=C \bm x(t)+D \bm u(t)
\end{align}
\ese
where $n\geq 0$, $m,p\geq 1$, $A\in\Rnn$, $B\in\Rnm$, $C\in\Rpn$, and $D\in\Rpm$. 

For $k\geq -1$, we define the \emph{$k$-th observability matrix} by
\begin{equation}
\label{obsmatrix}
\Omega_k:=\begin{cases}
0_{0,n}&\mbox{ if } k=-1\\
\bbm \Omega_{k-1}\\CA^{k}\ebm&\mbox{ if } k\geq 0,
\end{cases} 
\end{equation}
the \emph{$k$-th controllability matrix} by
\begin{equation}
\label{contmatrix}
\Gamma_k:=\begin{cases}
0_{n,0}&\mbox{ if } k=-1\\
\bbm A^{k}B&\Gamma_{k-1}\ebm&\mbox{ if } k\geq 0 
\end{cases},
\end{equation}
and the \emph{$k$-th Toeplitz matrix of Markov parameters} by
\begin{equation}
\label{toepmatrix}
\Theta_k :=\begin{cases}
0_{0,0}&\mbox{ if } k=-1\\
\bbm \Theta_{k-1} & 0\\C\Gamma_{k-1}&D\ebm&\mbox{ if } k\geq 0. 
\end{cases}.
\end{equation} 
We denote the smallest integer $k\geq 0$ such that $\rank \Omega_k=\rank \Omega_{k-1}$ by $\ell(C,A)$. Note that $0\leq \ell(C,A)\leq n$ and if $n=0$ then $\ell(C,A)=0$. Moreover,  if $(C,A)$ is observable, then $\ell(C,A)$ is the observability index of the pair.  In the following we call $\ell(C,A)$ the \emph{lag} of the system; for a justification of this terminology, see statements (iv) and (vii) of \citep[Thm. 6]{Willems86a}. 

\subsection{Systems with $m$ inputs and $p$ outputs}
We identify the system \eqref{eq: generic linear system} with the matrix $\sa$. Given $m\geq 1$ and $p\geq 1$, we define the set of all systems with lag $\ell$ and $n$ states by
\[
\calS(\ell,n):=\{\sa\in\R^{(n+p)\times(n+m)} \mid \ell(C,A)=\ell\}.
\]
We also define
\begin{align*}
\calS(n)&:=\{\sa\in\calS(\ell,n) \mid \ell\in[0,n]\}\\
\calS&:=\{\sa\in\calS(n) \mid n\geq 0\}\\
\calM&:=\{\sa\in\calS \mid (A,B)\text{ is controllable and } (C,A) \text{ is observable}\}.
\end{align*}

\subsection{Isomorphic systems} Two systems $\sai\in\calS(n)$ with $i\in[1,2]$ 
%and $n\geq 1$ 
are said to be {\em isomorphic\/} if $D_1=D_2$ and there exists a nonsingular matrix $S\in\Rnn$ such that $A_1=S\inv A_2 S$, $B_1=S\inv B_2$, and $C_1=C_2 S$. By extending this notion to a set of systems, we say that $\calS'\subseteq\calS(n)$ has the {\em isomorphism property\/} if any pair of systems in $\calS'$ is isomorphic. By convention, the empty set has the isomorphism property. 

\section{Recap of the fundamental lemma and the high-level problem}\label{sec:fl}

In this section we recap the fundamental lemma \cite{Willems05} and we sketch the problem of this paper at a high level. Let $m \geq 1$ and $p \geq 1$. Consider the linear discrete-time input-state-output system
\begin{equation}
\begin{aligned}
\label{e:true sys} 
\bm x(t+1)&=\ta \bm x(t)+\tb \bm u(t)\\
\bm y(t)&=\tc \bm x(t)+\td \bm u(t),
\end{aligned}
\end{equation}
where $\nt\geq 0$, $\ta \in\R^{\nt\times \nt}$, $\tb\in\R^{\nt\times m}$, 
$\tc\in\R^{p\times \nt}$, and $\td\in\Rpm$. We refer to the system \eqref{e:true sys} as the {\em true system}. Its lag will be denoted by $\lt:=\ell(\tc,\ta)$.

Throughout the paper, we assume that the true system is \emph{minimal}, i.e. both observable and controllable. We also assume to know upper bounds on the true lag and true state-space dimension. In other words, we have $L$ and $N$ such that:
\beq\label{e:bounds on lt and nt}
\lt\leq L\qand \nt\leq N.
\eeq 
Let $t\geq 1$. Consider the input-output data $(\dut{t-1},\dyt{t-1})$ generated by system \eqref{e:true sys} during the time window $[0,t-1]$. In what follows, we state a reformulation of the fundamental lemma \cite{Willems05}.

\begin{proposition}
\label{p:fl}
Consider the minimal system \eqref{e:true sys}. Suppose that $\lt \geq 1$, and that $N$ and $L$ satisfy \eqref{e:bounds on lt and nt}. Assume that $u_{[0,t-1]}$ is persistently exciting of order $N+L+1$. Then 
\begin{equation}
\label{rankcondFL}
\rank H_{L}\left( \begin{bmatrix}
u_{[0,t-1]} \\ y_{[0,t-1]}
\end{bmatrix}\right) = (L+1)m+\nt.
\end{equation} 
\end{proposition}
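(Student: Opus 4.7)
I would reduce the statement to the standard input--state form of the fundamental lemma by peeling off the output dynamics of \eqref{e:true sys}. Let $X := \bbm \bm x(0) & \bm x(1) & \cdots & \bm x(t-1-L) \ebm$ denote the state trajectory generated by $\dut{t-1}$. Iterating the output recursion over each sliding window of length $L+1$ yields the decomposition
\begin{equation*}
H_L(\dyt{t-1}) \;=\; \Omega_L X + \Theta_L H_L(\dut{t-1}).
\end{equation*}
After reordering block rows of $H_L\!\left(\bbm \dut{t-1} \\ \dyt{t-1}\ebm\right)$ into $\bbm H_L(\dut{t-1}) \\ H_L(\dyt{t-1})\ebm$, the rank-preserving operation of subtracting $\Theta_L$ times the upper block from the lower block gives
\begin{equation*}
\rank H_L\!\left(\bbm \dut{t-1} \\ \dyt{t-1} \ebm\right) \;=\; \rank \bbm H_L(\dut{t-1}) \\ \Omega_L X \ebm \;=\; \rank\!\left( \bbm I & 0 \\ 0 & \Omega_L \ebm \bbm H_L(\dut{t-1}) \\ X \ebm \right).
\end{equation*}

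Next, I would argue that the block-diagonal prefactor has full column rank $(L+1)m + \nt$. The top block is an identity; for the bottom block, observability of $(\tc,\ta)$ together with $L \geq \lt$ forces $\rank \Omega_L = \nt$, since by definition of $\lt$ the rank of $\Omega_k$ has already stabilized at $\nt$ once $k$ reaches $\lt - 1$. Hence the product above has the same rank as $\bbm H_L(\dut{t-1}) \\ X \ebm$, and it remains to show this is exactly $(L+1)m + \nt$.

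The final step is to invoke the classical fundamental lemma in its input--state form (see \cite{Willems05,Waarde20}): if $(\ta,\tb)$ is controllable and $\dut{t-1}$ is persistently exciting of order $\nt + L + 1$, then $\bbm H_L(\dut{t-1}) \\ X \ebm$ has full row rank $(L+1)m + \nt$. Both hypotheses are in force here: controllability comes from minimality of \eqref{e:true sys}, and persistency of excitation of order $N+L+1 \geq \nt+L+1$ implies persistency of excitation of order $\nt+L+1$. The last step is the main obstacle, since it is the nontrivial content of the fundamental lemma. A self-contained argument would take $(\eta,\xi)$ in the left kernel of $\bbm H_L(\dut{t-1}) \\ X \ebm$, use controllability to prepend a virtual input of length $\nt$ that drives the initial state freely, expand each $\bm x(i)$ through the state recursion to push the dependence on $\bm x(0)$ onto the prepended inputs, and then appeal to persistency of excitation of the extended input of order $\nt+L+1$ to conclude $(\eta,\xi) = 0$.
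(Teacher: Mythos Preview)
Your argument is correct and is precisely the standard route to this statement. The paper itself does not prove Proposition~\ref{p:fl}: it is presented as a reformulation of the fundamental lemma of \cite{Willems05} and used without proof. Your decomposition $H_L(y)=\Omega_L X+\Theta_L H_L(u)$, the rank-preserving reduction to $\bbm H_L(u)\\X\ebm$ via full column rank of $\Omega_L$ (using observability and $L\geq\lt$), and the appeal to the input--state fundamental lemma under controllability and persistency of excitation of order $\nt+L+1\leq N+L+1$ is exactly the argument in \cite{Waarde20}. The closing self-contained sketch is a little loose --- the ``prepend a virtual input'' device is not quite how the proof in \cite{Waarde20} proceeds (there one uses Cayley--Hamilton to rewrite $\xi^\top \bm x(k)$ in terms of later states and intervening inputs, then shifts across all columns to hit the full-row-rank Hankel $H_{\nt+L}(u)$) --- but since you already invoke the cited result, this does not affect correctness.
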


The latter rank condition implies that all length $L+1$ input-output trajectories of \eqref{e:true sys} can be obtained as linear combinations of the columns of the data Hankel matrix in \eqref{rankcondFL}. This fact has been used frequently in the recent data-driven control literature \cite{Markovsky2008,Coulson19,DePerzik20,Berberich21}. In terms of system identification, Proposition~\ref{p:fl} has the following two consequences:
\ben[label=\normalfont{(}\roman*\normalfont{)}, ref=(\roman*)]
	\item The state-space dimension of the true system can be obtained from data as
	$$
	\nt = \rank H_{L}\left( \begin{bmatrix}
u_{[0,t-1]} \\ y_{[0,t-1]}
\end{bmatrix}\right) - (L+1)m.
	$$
	\item The system matrices $\ta,\tb,\tc$ and $\td$ can be identified up to an isomorphism from the data. That is, using $(\dut{t-1},\dyt{t-1})$ we can find matrices $A,B,C$ and $D$ such that 
	$$\begin{bmatrix}
	A & B \\ C & D
\end{bmatrix} \:\:\: \text{ and } \:\:\:\begin{bmatrix}
	\ta & \tb \\ \tc & \td
\end{bmatrix}$$ 
are isomorphic, using, for example, subspace identification \cite{vanOverschee1996}. 
\een

Thus, the fundamental lemma can be interpreted as an experiment design result, that tells us that the system matrices can be uniquely identified (up to isomorphism) as long as the input is designed to be persistently exciting. This condition on the input, however, puts a bound on the required number of data samples. In fact, $u_{[0,t-1]}$ can only be persistently exciting of order $N+L+1$ if 
$$
t \geq N+L+m(N+L+1).
$$
In fact, by designing $u_{[0,t-1]}$ such that $H_{N+L}(u_{[0,t-1]})$ is square and nonsingular, the system matrices can be identified from an input-output experiment of length exactly $N+L+m(N+L+1)$. However, as we will see in this paper it is not necessary to collect this many samples in order to identify the system. The main questions of this paper are thus the following: 
\begin{enumerate}
\item What is the shortest experiment for linear system identification? In other words, what is the smallest time $T$ for which there exists an ``informative" experiment $(u_{[0,T-1]},y_{[0,T-1]})$ from which the matrices $\ta,\tb,\tc$ and $\td$ can be identified up to isomorphism?
\item If possible, how can we design precisely $T$ inputs such that the resulting data $(u_{[0,T-1]},y_{[0,T-1]})$ are informative for system identification? 
\end{enumerate}

In order to give a precise answer to these questions, we first recap the definition and characterization of informativity for system identification \cite{CamlibelRapisarda2024} in Section~\ref{sec:inf}. We then provide a formal mathematical problem statement in Section~\ref{sec:problemstatement}, and solve this problem in Section~\ref{sec:experimentdesign}.

\section{Informativity for system identification}\label{sec:inf}
In this section we recap the definition and characterization of informativity for system identification from \cite{CamlibelRapisarda2024}. Without making any a priori assumption on the input, let $(u_{[0,t-1]},y_{[0,t-1]})$ be data obtained from \eqref{e:true sys}. This means that there exists $\dxt{t}\in\R^{\nt\times(t+1)}$ such that
\begin{equation}
\label{stateseqtruesys}
\bbm\dxtp{t}\\\dyt{t-1}\ebm
=\bbm \ta& \tb\\\tc& \td\ebm
\bbm x_{[0,t-1]} \\\dut{t-1}\ebm.
\end{equation}
A system $\sa\in\calS(n)$ {\em explains\/} the data $(\dut{t-1},\dyt{t-1})$ if there exists $\dxt{t}\in\R^{n\times(t+1)}$ such that
$$
\bbm\dxtp{t}\\\dyt{t-1}\ebm
=\bbm A& B\\C& D\ebm
\bbm x_{[0,t-1]} \\\dut{t-1}\ebm.
$$
The set of all systems that explain the data $(\dut{t-1},\dyt{t-1})$ is denoted by $\calE_t$ and is referred to as the set of {\em explaining systems}. The subsets of $\calE_t$ consisting of  systems with a given lag and state space dimension are respectively defined as
\[
\sylt{\ell}{n}:=\calE_t\cap\calS(\ell,n)\qand
\syt{n}:=\calE_t\cap\calS(n).
\]

\subsection{Definition of informativity for system identification}
The set $\calS_{L,N}$ consists of all systems with lag at most $L$ and state-space dimension at most $N$, i.e.,
\[
\calS_{L,N}:=
\bigcup_{\scriptsize
\bmat
\ell\in[0,L]\\
n\in[0,N]
\emat
}
\calS(\ell,n).
\]
In view of the bounds \eqref{e:bounds on lt and nt} and the minimality of the true system, we have the following \emph{prior knowledge}:
$$
\bbm \ta& \tb\\\tc& \td\ebm \in \calM \cap \calS_{L,N}.
$$

With this in mind, we define the notion of informativity for system identification \cite{CamlibelRapisarda2024}. We also refer the reader to \cite{vanWaarde2020} for a general treatment of informativity for different analysis and control problems.  
\begin{definition}
\label{def:IWPKC}
We say that the data $(\dut{t-1},\dyt{t-1})$ {\em are informative for system identification\/} if
\ben[label=\normalfont{(}\roman*\normalfont{)}, ref=(\roman*)]
\item\label{i:first} $\calE_t\cap \calM \cap\calS_{L,N}=\syt{\nt}\cap \calM \cap\calS_{L,N}$, and
\item\label{i:second} $\calE_t\cap \calM \cap\calS_{L,N}$ has the isomorphism property.
\een
\end{definition}
\noi The first condition means that all explaining systems satisfying the prior knowledge have $\nt$ states, while the second one asserts that any pair of such systems is isomorphic. Definition~\ref{def:IWPKC} thus captures the important property that there is precisely one equivalence class of state-space systems explaining the input-output data. In \citep{CamlibelRapisarda2024}, necessary and sufficient conditions are provided under which the data are informative for system identification. In order to recall these conditions, we first need to define two important integers, namely the shortest lag and minimum number of states.

\subsection{The shortest lag and minimum number of states}
Given the data $(u_{[0,t-1]},y_{[0,t-1]})$, we define the following two integers that play a pivotal role in the characterization of informativity for system identification:
\begin{align*}
\lmt&:=\min\{\ell\geq0 \mid \sylt{\ell}{n}\neq \emptyset \text{ for some } n\geq0 \}\\
\nmt&:=\min\{n\geq 0 \mid \syt{n}\neq \emptyset \}.
\end{align*}

As proven in \cite{CamlibelRapisarda2024}, these integers admit a simple characterization in terms of the data. To explain this, let $k\in[0,t-1]$ and denote the Hankel matrix of $k+1$ block rows constructed from the data $(\dut{t-1},\dyt{t-1})$ by 
\[
H_{k,t}:=\bbm
H_k(\dyt{t-1})\\
H_k(\dut{t-1})
\ebm
=
\begin{bmatrix} 
y(0)&\cdots&y(t-k-1)\\
y(1)&\cdots&y(t-k)\\
\vdots&&\vdots\\
y(k)&\cdots&y(t-1)\\
u(0)&\cdots&u(t-k-1)\\
u(1)&\cdots&u(t-k)\\
\vdots&&\vdots\\
u(k)&\cdots&u(t-1)
\end{bmatrix}.
\]
We also define
\[
G_{k,t}:=\bbm
H_{k-1}(\dyt{t-2})\\
H_k(\dut{t-1})
\ebm
.
\]
Note that $G_{k,t}$ may be obtained by removing the last row of outputs $y_{[k,t-1]}$ from $H_{k,t}$.
Now, define 
$$
\delta_{k,t}:=
\begin{cases}
p&\text{if } k=-1\\
\rank H_{k,t}-\rank G_{k,t}&\text{if } k\in[0,t-1] .
\end{cases}
$$
Note that 
$$
p\geq \delta_{k,t}\geq 0 \text{ for all } k\in[-1,t-1].
$$
Throughout the paper, we assume that $\dut{t-1}\neq 0_{m,t}$. From this blanket assumption, it follows that $\rank H_{t-1,t}=\rank G_{t-1,t}=1$ and hence 
\beq\label{e: rho T-1}
\delta_{t-1,t}=0.
\eeq

Let $q_t\in[0,t-1]$ be the smallest integer such that $\delta_{q_t,t}=0$. Note that $q_t$ is well-defined due to \eqref{e: rho T-1}. The shortest lag and minimum number of states can be computed in terms of $\delta_{k,t}$ and $q_t$, as recalled next (see \cite[Thm. 8]{CamlibelRapisarda2024}). 

\begin{proposition}\label{p:nmin} $\lmt=q_t$ and $\nmt=\sum_{i=0}^{\lmt}\delta_{i,t}$. 
\end{proposition}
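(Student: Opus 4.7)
The plan is to prove the two equalities separately, using the same pattern for each: an ``only if'' direction that bounds $q_t$ (respectively $\sum_{i=0}^{\lmt} \delta_{i,t}$) from above by exploiting an arbitrary explaining system, and an ``if'' direction that exhibits an explaining system attaining the lower quantity.

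For the identity $\lmt = q_t$, I would first show $q_t \leq \lmt$. Taking any $\sa \in \sylt{\lmt}{n}$, the defining property of the lag produces scalars $\alpha_0, \dots, \alpha_{\lmt-1}$ with $CA^{\lmt} = \sum_{i=0}^{\lmt-1} \alpha_i CA^i$. Substituting into the state-space recursion at every time instant yields an output recurrence expressing $\bm y(s+\lmt)$ as a linear combination of $\bm y(s), \dots, \bm y(s+\lmt-1)$ and $\bm u(s), \dots, \bm u(s+\lmt)$ for every $s \in [0, t-\lmt-1]$. This forces the last output block-row $\bm y_{[\lmt,t-1]}$ of $H_{\lmt,t}$ to lie in the row span of the remaining rows, which are precisely the rows of $G_{\lmt,t}$; hence $\delta_{\lmt,t} = 0$, and by minimality $q_t \leq \lmt$. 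For the converse $\lmt \leq q_t$, I would reverse the argument: the condition $\delta_{q_t,t} = 0$ provides an explicit linear dependence of $\bm y_{[q_t,t-1]}$ on $q_t$ past output rows and $q_t+1$ input rows, which translates to a scalar AR-X-type recurrence for the data. An observer-canonical-form realization of this recurrence then produces an explaining system of lag at most $q_t$, yielding $\lmt \leq q_t$.

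For the identity $\nmt = \sum_{i=0}^{\lmt} \delta_{i,t}$, the strategy is to interpret the sum as the ``output-rank mass'' of $H_{\lmt,t}$ after factoring out the inputs. For any $\sa \in \sylt{\lmt}{n}$, the Hankel factorization $H_{\lmt}(\dyt{t-1}) = \Omega_{\lmt}\, x_{[0,t-\lmt-1]} + \Theta_{\lmt}\, H_{\lmt}(\dut{t-1})$, combined with a row-by-row peeling of $H_{\lmt,t}$, identifies each $\delta_{i,t}$ with a rank increment of the observability matrix $\Omega_i$. Summing over $i \in [0, \lmt]$ gives $\sum_{i=0}^{\lmt} \delta_{i,t} \leq \rank \Omega_{\lmt} \leq n$, so $\sum \delta_{i,t} \leq \nmt$. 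Equality is then achieved by showing that the observer-canonical realization constructed in the first half has observable state dimension exactly $\sum_{i=0}^{\lmt} \delta_{i,t}$, i.e.\ no redundant directions are introduced.

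The main technical obstacle I anticipate is the bookkeeping in the peeling argument: $G_{k,t}$ differs from $H_{k-1,t}$ both in column count (one fewer) and in carrying an extra input row $\bm u_{[k,t-1]}$, so the successive differences $\rank H_{k,t} - \rank G_{k,t}$ do not telescope into a pure output-rank statement. Making this precise requires carefully separating the rank contribution of each freshly added input row from that of the corresponding output row, and using the fact that prior inputs already populate all shallower depths to avoid double counting. Once this is settled, identifying the total with $\rank \Omega_{\lmt}$ and matching it against the realization's state dimension is straightforward.
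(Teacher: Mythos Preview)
The paper does not prove this proposition; it is quoted from \cite[Thm.~8]{CamlibelRapisarda2024} without argument, so there is no in-paper proof to compare against. I can only assess your sketch on its own terms.

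Your plan for $\lmt=q_t$ is sound, modulo one correction: in the direction $q_t\leq\lmt$ you invoke ``scalars $\alpha_0,\dots,\alpha_{\lmt-1}$'' with $CA^{\lmt}=\sum_i\alpha_i CA^i$, but for $p>1$ the lag condition $\rank\Omega_{\lmt}=\rank\Omega_{\lmt-1}$ only furnishes \emph{matrices} $M_i\in\R^{p\times p}$ with $CA^{\lmt}=\sum_i M_i\,CA^i$. With this fix the recurrence argument goes through and gives $\delta_{\lmt,t}=0$. The converse via an observer-form realization of the data ARX relation is also fine for establishing $\lmt\leq q_t$.

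The genuine gap is in the second identity. Your upper bound $\sum_i\delta_{i,t}\leq\nmt$ is correct---indeed, for any explaining system one has $\delta_{k,t}\leq\rank\Omega_k-\rank\Omega_{k-1}$ (only the rows of $CA^k$ that are new relative to $\Omega_{k-1}$ can contribute new rows to $H_{k,t}$ beyond $G_{k,t}$), and these increments telescope to $\rank\Omega_{q_t}\leq n$. But for the matching direction you assert that ``the observer-canonical realization constructed in the first half has observable state dimension exactly $\sum_i\delta_{i,t}$''. The realization you actually built from the ARX relation $y(s+q_t)=\sum P_iy(s+i)+\sum Q_ju(s+j)$ has state dimension $p\,q_t$ and is observable by construction, so passing to the observable part does nothing. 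Whenever some $\delta_{k,t}<p$---which is the generic situation when the true observability indices are unequal---one has $p\,q_t>\sum_i\delta_{i,t}$ strictly, and your realization has too many states to witness $\nmt\leq\sum_i\delta_{i,t}$. Hitting the target dimension requires a refined construction that, at each depth $k$, introduces only $\delta_{k,t}$ new state coordinates corresponding to the genuinely new output directions (a Kronecker/Popov-type scheme driven by the profile $\delta_{0,t},\dots,\delta_{q_t-1,t}$). That construction is the substantive content behind the cited Theorem~8, and your sketch does not supply it.
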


An important consequence of Proposition~\ref{p:nmin} is that the integers $\lmt$ and $\nmt$ can readily be computed using the data.

\subsection{Necessary and sufficient conditions for informativity}
We are now in a position to recall the conditions for informativity for system identification. Before we do so, we note that \cite[Thms. 6(b) and 8]{CamlibelRapisarda2024} shows that $N-\nmt+\lmt$ is an upper bound for the lag of {\em any\/} explaining system with at most $N$ states. This upper bound, which is determined by the data and $N$, is in some cases smaller than the given upper bound $L$. This means that we can replace $L$ by the {\em actual\/} upper bound on the lag:
\[
L_t^a:=\min(L,N-\nmt+\lmt).
\]  

The following theorem from \citep[Thm. 9]{CamlibelRapisarda2024} provides \emph{necessary and sufficient} conditions for the data to be informative for system identification. 

\bthe\label{t:main}
The data $(u_{[0,t-1]},y_{[0,t-1]})$ are informative for system identification if and only if the following two conditions hold:
\bse\label{e:nec suff conditions}
\begin{gather}
t \geq L_t^a+(L_t^a+1)m+\nmt\label{e:lb T}\\
\rank H_{L_t^a,t}=(L_t^a+1)m+\nmt. \label{e:main rank condition}
\end{gather}
\ese
Moreover, if the conditions in \eqref{e:nec suff conditions} are satisfied, then 
\bse\label{e:extras}
\begin{gather}
\lt=\lmt\label{e:extra-lm}\\
\nt=\nmt\label{e:extra-nm}\\
\calE_t\cap\calM \cap \calS_{L,N}=\calE_t(\nmt).\label{e:extra-main}
\end{gather}
\ese
\ethe 

\section{Formal problem statement}
\label{sec:problemstatement}

If the data $(u_{[0,t-1]},y_{[0,t-1]})$ are informative for system identification, then $\lmt = \lt$ and $\nmt = \nt$ by Theorem~\ref{t:main}. In this case, $L_t^a$ is equal to
$$
L^a := \min(L,N-\nt+\lt).
$$
It follows from the lower bound \eqref{e:lb T} that 
\begin{equation} 
\label{defT}
t \geq T := L^a + (L^a+1)m + \nt,
\end{equation}
that is, any set of informative input-output data contains at least $T$ samples. 

The main question is now as follows: can we design a sequence of inputs $u_{[0,T-1]}$ of length \emph{precisely} $T$ such that the resulting input-output data $(u_{[0,T-1]},y_{[0,T-1]})$ are informative for system identification? We will focus on an \emph{online} design of the inputs, in the sense that the choice of $u(t)$ is guided by the data $(u_{[t-1]},y_{[0,t-1]})$ collected at previous time steps. We formalize the problem as follows. 

\begin{problem}
\label{prob:problemformulation}
Let $T$ be as in \eqref{defT}. Consider the system \eqref{e:true sys} with initial state $x(0) = x_0 \in \mathbb{R}^n$. Let $y(t) \in \mathbb{R}^p$ denote the output of \eqref{e:true sys} at time $t$ resulting from $x_0$ and the control inputs $u(0),u(1),\ldots,u(t) \in \mathbb{R}^m$. 

For each $t = 0,1,\ldots,T-1$, given $(u_{[0,t-1]},y_{[0,t-1]})$, design $u(t)$ such that the resulting data $(u_{[0,T-1]},y_{[0,T-1]})$ are informative for system identification. 
\end{problem}

We note that the initial state $x_0$ of the system \eqref{e:true sys} is arbitrary and not assumed to be given. Our goal is thus to design inputs that lead to an informative experiment \emph{irrespective of $x_0$}. 

In addition, we emphasize that it is not straightforward to see that Problem~\ref{prob:problemformulation} has a solution. In fact, even though $T$ is a lower bound on the number of data samples required for system identification, it is at this point unclear whether there \emph{exists} an experiment of length exactly $T$. Also, even if such an experiment exists, it is far from obvious that there is a systematic way of \emph{constructing} such an experiment without knowledge of the true system. An additional challenge is that the time $T$ itself depends on the true lag and true state-space dimension, which are \emph{not a priori known}. 

Remarkably, as we show in this paper, it turns out to be \emph{always} possible to design an informative experiment of length precisely $T$, despite these challenges.

\section{Online experiment design}
\label{sec:experimentdesign}
In this section we present our main results, building up to the online experiment design method. We start with the following auxiliary lemma that asserts that the rank of the Hankel matrix $H_{k,t}$ can be increased at time $t+1$, assuming that certain conditions are met.  

\begin{lemma} 
\label{l:increaserank}
Let $t \geq 2$ and $k \geq 1$. If 
\begin{equation}
\label{conditionGH}
\rank G_{k,t} < m+\rank H_{k-1,t}
\end{equation} 
then there exists an $(m-1)$-dimensional affine set $\calA_t \subseteq\R^m$ such that
\begin{equation}
\label{rankincrease}
\rank H_{k,t+1} =\rank H_{k,t}+1 
\end{equation} 
whenever $u(t) \not\in \calA_t$.
\end{lemma}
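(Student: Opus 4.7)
My plan is to prove the contrapositive in spirit: show that under assumption \eqref{conditionGH}, the set of ``bad'' inputs $u(t)$ that fail to increase the rank of $H_{k,t+1}$ is a \emph{proper} affine subset of $\R^m$, and hence can be enclosed in an $(m-1)$-dimensional affine set $\calA_t$.

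First I observe that $H_{k,t+1} = [H_{k,t} \mid c(u(t))]$, where $c(u(t))$ is the new column stacking $y(t-k),\ldots,y(t),u(t-k),\ldots,u(t)$. Using $y(t) = Cx(t) + Du(t)$ from \eqref{e:true sys}, this column is affine in $u(t)$: $c(u(t)) = c_0 + E u(t)$, where $E$ is zero except for a $D$-block in the row-block corresponding to $y(t)$ and an $I_m$-block in the row-block corresponding to $u(t)$. Consequently the bad set $\calB := \{u(t) \in \R^m : c(u(t)) \in \col H_{k,t}\}$ is either empty or affine, and $\calB = \R^m$ if and only if simultaneously $c_0 \in \col H_{k,t}$ and $\col E \subseteq \col H_{k,t}$.

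The heart of the proof is ruling out $\calB = \R^m$ via a rank identity. The key structural observation, which I verify first, is that $G_{k,t}$ and $H_{k-1,t}$ share a common submatrix $H_{k-1,t}^{(0)}$ obtained from $H_{k-1,t}$ by deleting its last column: explicitly $G_{k,t} = \bbm H_{k-1,t}^{(0)} \\ u_{[k,t-1]} \ebm$, while $H_{k-1,t} = [H_{k-1,t}^{(0)} \mid h^{\mathrm{last}}]$ for a specific column $h^{\mathrm{last}}$ built from the data. Now assume $\calB = \R^m$. (a) From $\col E \subseteq \col H_{k,t}$, every $y \in \R^m$ admits a $z$ with $H_{k,t} z = E y$; restricting this identity to the rows of $H_{k,t}$ that constitute $H_{k-1,t}^{(0)}$ forces $H_{k-1,t}^{(0)} z = 0$, while the bottom block-row gives $u_{[k,t-1]} z = y$. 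Hence $u_{[k,t-1]}\big|_{\ker H_{k-1,t}^{(0)}}$ surjects onto $\R^m$, and a dimension count via $\ker G_{k,t} = \ker H_{k-1,t}^{(0)} \cap \ker u_{[k,t-1]}$ produces $\rank G_{k,t} = \rank H_{k-1,t}^{(0)} + m$. (b) From $c_0 \in \col H_{k,t}$, the same row-restriction applied to $H_{k,t} z^0 = c_0$ yields $H_{k-1,t}^{(0)} z^0 = h^{\mathrm{last}}$, so $h^{\mathrm{last}} \in \col H_{k-1,t}^{(0)}$ and $\rank H_{k-1,t} = \rank H_{k-1,t}^{(0)}$. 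Combining (a) and (b) gives $\rank G_{k,t} = m + \rank H_{k-1,t}$, directly contradicting \eqref{conditionGH}.

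Therefore $\calB$ is a proper affine subset of $\R^m$, hence of dimension at most $m-1$, and I take $\calA_t$ to be any $(m-1)$-dimensional affine set of $\R^m$ containing $\calB$; for every $u(t) \notin \calA_t$ one has $c(u(t)) \notin \col H_{k,t}$, which is exactly \eqref{rankincrease}. The main obstacle is identifying the common submatrix $H_{k-1,t}^{(0)}$ and recognising that the two containments $c_0 \in \col H_{k,t}$ and $\col E \subseteq \col H_{k,t}$ translate, through the block structure of $H_{k,t}$, into the clean rank identities needed for the contradiction; once this decomposition is in hand the remaining arithmetic is routine.
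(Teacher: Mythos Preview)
Your proof is correct, but it takes a genuinely different route from the paper's. The paper works with \emph{left} kernels: from \eqref{conditionGH} it observes that $\lk H_{k-1,t}\times\{0_m\}\subsetneq\lk G_{k,t}$, extracts a vector $(\xi_0,\ldots,\xi_{k-1},\eta_0,\ldots,\eta_k)\in\lk G_{k,t}$ with $\eta_k\neq 0$, and defines $\calA_t$ explicitly as the hyperplane on which this vector annihilates the new column of $G_{k,t+1}$. If $u(t)\notin\calA_t$, the last column of $G_{k,t+1}$ lies outside $\col G_{k,t}$, hence the last column of $H_{k,t+1}$ lies outside $\col H_{k,t}$. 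Your argument instead characterises the entire bad set $\calB$ via column-space containment in $H_{k,t}$ and rules out $\calB=\R^m$ by a contradiction built on the block decomposition $G_{k,t}=\bbm H_{k-1,t}^{(0)}\\ u_{[k,t-1]}\ebm$ (which is correct as stated).

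What each approach buys: the paper's construction yields an $\calA_t$ that is computable purely from the data $(u_{[0,t-1]},y_{[0,t-1]})$, with no reference to the true system; this matters for the procedure \textsc{OnlineExperiment}$(L,N)$, where one must actually test $u(t)\notin\calA_t$ online. Your $\calB$ is defined through $c_0$, which contains $\tc\, x(t)$ and is therefore not computable from the available data alone; for the bare existence statement of the lemma this is harmless, but to make it algorithmically usable you would have to rephrase the argument through $G_{k,t+1}$ (whose new column does not involve $y(t)$), at which point it essentially collapses to the paper's proof. Conversely, your decomposition makes the inequality $\rank G_{k,t}\leq m+\rank H_{k-1,t}$ and its equality case structurally transparent, and shows that the true bad set can be strictly smaller than the $(m-1)$-dimensional hyperplane the paper exhibits.
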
 

\begin{proof}
Note that $\lk H_{k-1,t} \times \{0_m\} \subseteq \lk G_{k,t}$. Therefore, it holds that $\dim \lk G_{k,t} \geq \dim \lk H_{k-1,t}$. It follows from the rank-nullity theorem that $kp +(k+1)m - \rank G_{k,t} \geq k(p+m) - \rank H_{k-1,t}$. As such, $\rank G_{k,t} \leq m + \rank H_{k-1,t}$. Moreover, note that $\rank G_{k,t} = m + \rank H_{k-1,t}$ if and only if $\lk H_{k-1,t} \times \{0_m\} = \lk G_{k,t}$. Therefore, \eqref{conditionGH} implies that there exist $\xi_i \in \mathbb{R}^p$ and $\eta_j \in \mathbb{R}^m$ with $i \in [0,k-1]$ and $j \in [0,k]$ such that $\eta_k \neq 0$ and 
$$
\begin{bmatrix}
\xi_0^\top & \xi_1^\top & \cdots & \xi_{k-1}^\top & \eta_0^\top & \eta_1^\top & \cdots & \eta_k^\top
\end{bmatrix} G_{k,t} = 0. 
$$
Now, define the set
$$
\mathcal{A}_t := \{v \in \mathbb{R}^m \mid \eta_k v + \sum_{i \in [0,k-1]} \xi_i^\top y(t-k+i) + \eta_i^\top u(t-k+i) = 0 \}.
$$
If $u(t) \not\in \mathcal{A}_t$ then 
$$
\begin{bmatrix}
\xi_0^\top & \xi_1^\top & \cdots & \xi_{k-1}^\top & \eta_0^\top & \eta_1^\top & \cdots & \eta_k^\top
\end{bmatrix} G_{k,t+1} \neq 0. 
$$
Since $\lk G_{k,t+1} \subseteq \lk G_{k,t}$, we conclude from the latter inequality that $\dim \lk G_{k,t+1} < \dim \lk G_{k,t}$. Therefore, the last column of $G_{k,t+1}$ is not a linear combination of the columns of $G_{k,t}$. Thus, the last column of $H_{k,t+1}$ is also not a linear combination of the columns of $H_{k,t}$. We conclude that \eqref{rankincrease} holds, which proves the lemma. 
\end{proof}

As long as the inequality \eqref{conditionGH} holds, Lemma~\ref{l:increaserank} may be successively applied several times to increase the rank of the Hankel matrix. In the next lemma, we show how to deduce from the data whether $k = L^a$, as soon as the condition \eqref{conditionGH} fails to hold. 
This lemma will be used as a stopping criterion for our online experiment design algorithm.

\begin{lemma}
\label{l:stoppingcriterion}
Let $t \geq 2$ and $k \geq 1$ and suppose that the data $(u_{[0,t-1]},y_{[0,t-1]})$ are such that $H_{k,t}$ has full column rank. Let $\tau \geq 0$ and assume that $u_{[t,t+\tau -1]}$ satisfy
\begin{enumerate}
\item \label{stopcond1}
 for every $s \in [t,t+\tau-1]$, $\rank G_{k,s} < m + \rank H_{k-1,s}$ and $u(s) \not\in \calA_s$, where $\calA_s$ is as in Lemma~\ref{l:increaserank}. 
\item \label{stopcond2}
 $\rank G_{k,t+\tau} = m + \rank H_{k-1,t+\tau}$. 
\end{enumerate}
Then the following statements hold:
\begin{enumerate}[label = (\alph*)]
\item If $k \geq \lt$ then
\label{lemmastopa}
\begin{itemize}
\item $\rank H_{k,t+\tau} = (k+1)m + \nt$,
\item $t+\tau = k + (k+1)m + \nt$,
\item $\ell_{\mathrm{min},t+\tau} = \lt$, and $n_{\mathrm{min},t+\tau} = \nt$.
\end{itemize}
\item If $k = L^a_{t+\tau}$ then
\label{lemmastopb}
\begin{itemize}
\item $k = L^a$,
\item $t+\tau = T$, and
\item $(u_{[0,T-1]},y_{[0,T-1]})$ are informative for system identification. 
\end{itemize}
\end{enumerate}
\end{lemma}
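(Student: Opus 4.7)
The plan is to prove (a) first, then derive (b) from (a) combined with Theorem~\ref{t:main}.

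For (a), I begin by iterating Lemma~\ref{l:increaserank}: condition~1 guarantees $\rank H_{k,s+1}=\rank H_{k,s}+1$ for every $s\in[t,t+\tau-1]$, and combined with the hypothesis $\rank H_{k,t}=t-k$ (full column rank), this yields $\rank H_{k,t+\tau}=(t+\tau)-k$. Next I use the standard block-Toeplitz factorizations of $H_{k,s}$, $G_{k,s}$, and $H_{k-1,s}$, each being the product of a structured matrix built from $\Omega_j,\Theta_j,\Gamma_j$ of the true system and a matrix stacking the true-state Hankel matrix with the associated input Hankel matrix. Since $k\geq\lt$, both $\Omega_{k}$ and $\Omega_{k-1}$ have full column rank $\nt$, so that the structured pre-multiplying matrices all have full column rank and $\rank H_{k,s}$, $\rank G_{k,s}$, $\rank H_{k-1,s}$ each reduce to the rank of the corresponding state/input matrix. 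In particular, $\rank H_{k,s}=\rank G_{k,s}$ (so $\delta_{k,s}=0$) and $\rank H_{k,s}\leq(k+1)m+\nt$.

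The crucial step is to combine condition~2, namely $\rank G_{k,t+\tau}=m+\rank H_{k-1,t+\tau}$, with the full column rank of $H_{k,t+\tau}$ from the iteration, in order to conclude the rank saturation $\rank H_{k,t+\tau}=(k+1)m+\nt$ and, equivalently, $t+\tau=k+(k+1)m+\nt$. This is the main technical obstacle: the argument must exploit both the shift-invariant Hankel structure coming from the state recursion $x(c+1)=\ta x(c)+\tb u(c)$ and the fact that condition~1 held strictly at every earlier step, so that the first time condition~2 triggers is also the first time the associated state/input matrix attains its full row rank $(k+1)m+\nt$. Once this saturation is established, Proposition~\ref{p:nmin} delivers $\ell_{\mathrm{min},t+\tau}=\lt$ and $n_{\mathrm{min},t+\tau}=\nt$: on one hand the factorization gives $\delta_{i,t+\tau}=0$ for every $i\geq\lt$, while on the other hand the saturation propagates to all $i\leq k$ and forces $\delta_{i,t+\tau}=\rank\Omega_i-\rank\Omega_{i-1}>0$ for every $i\in[0,\lt-1]$, so that $n_{\mathrm{min},t+\tau}=\sum_{i=0}^{\lt}\delta_{i,t+\tau}=\rank\Omega_{\lt}-\rank\Omega_{-1}=\nt$ by telescoping.

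For (b), assume $k=L^a_{t+\tau}$. Since $L^a_{t+\tau}$ is an upper bound for the lag of every system in $\calE_{t+\tau}$ and the true system belongs to $\calE_{t+\tau}$ with lag $\lt$, we have $k=L^a_{t+\tau}\geq\lt$, so part~(a) applies. From (a), $\ell_{\mathrm{min},t+\tau}=\lt$ and $n_{\mathrm{min},t+\tau}=\nt$, whence $L^a_{t+\tau}=\min(L,N-\nt+\lt)=L^a$ (giving $k=L^a$) and $t+\tau=k+(k+1)m+\nt=T$ by~\eqref{defT}. Informativity at $T$ then follows immediately from Theorem~\ref{t:main}: condition~\eqref{e:lb T} becomes the equality $T=T$, and condition~\eqref{e:main rank condition} is precisely the saturation identity from (a). The heart of the proof is therefore the rank-saturation step in~(a); every remaining step is a routine consequence of Lemma~\ref{l:increaserank}, Proposition~\ref{p:nmin}, and Theorem~\ref{t:main}.
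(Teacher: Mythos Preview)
Your high-level plan coincides with the paper's: iterate Lemma~\ref{l:increaserank} to obtain $\rank H_{k,t+\tau}=t+\tau-k$, use the block-Toeplitz factorizations (with $\Omega_{k-1}$ full column rank since $k\geq\lt$) to identify $\rank H_{k,t+\tau}=\rank G_{k,t+\tau}$ with the rank of the state/input matrix, deduce saturation, and then read off the remaining items. Your derivation of (b) from (a) via Theorem~\ref{t:main} is also essentially the paper's.

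There is, however, a genuine gap at the step you correctly flag as the main obstacle. You claim that ``the first time condition~2 triggers is also the first time the associated state/input matrix attains its full row rank,'' and you attribute this to the shift structure together with condition~1 having been strict at all earlier steps. But the strictness of condition~1 for $s<t+\tau$ is already fully consumed by the rank-increase iteration; it contributes nothing further to the saturation argument. What the paper actually uses here is \emph{controllability of $(\ta,\tb)$}: after the factorizations, condition~2 becomes
\[
\rank\begin{bmatrix}x_{[0,t+\tau-k-1]}\\ H_k(u_{[0,t+\tau-1]})\end{bmatrix}
= m + \rank\begin{bmatrix}x_{[0,t+\tau-k]}\\ H_{k-1}(u_{[0,t+\tau-1]})\end{bmatrix},
\]
and then \cite[Lemma~23]{CamlibelRapisarda2024} combined with controllability yields full row rank $\nt+(k+1)m$ directly, with no appeal to what happened at earlier times. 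Without controllability the implication can fail (confine the state to a proper reachable subspace and the displayed equality can hold with the left-hand side strictly below $\nt+(k+1)m$), so your sketch as written does not establish saturation. You never invoke controllability, and the ingredients you list do not substitute for it.

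One place where your route genuinely differs from the paper's, and works: for $\ell_{\mathrm{min},t+\tau}$ and $n_{\mathrm{min},t+\tau}$ you go through Proposition~\ref{p:nmin} and the telescoping identity $\delta_{i,t+\tau}=\rank\Omega_i-\rank\Omega_{i-1}$, whereas the paper bounds $n_{\mathrm{min},t+\tau}$ from both sides by comparing with an arbitrary minimum-dimensional explaining system and then invokes \cite[Eq.~(17)]{CamlibelRapisarda2024} for $\ell_{\mathrm{min},t+\tau}$. Your approach is valid and arguably more transparent, but it relies on the saturation at depth $k$ propagating to every depth $i\leq k$ (i.e., the depth-$i$ state/input matrix also having full row rank, so that $\rank H_{i,t+\tau}=\rank\Omega_i+(i+1)m$ and $\rank G_{i,t+\tau}=\rank\Omega_{i-1}+(i+1)m$); you should state and justify that propagation explicitly.
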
 

\begin{comment} 
\subsection{Relation between input-output and input-state Hankel matrices}

Consider the data $(u_{[0,t-1]},y_{[0,t-1]})$ and an explaining system $\sa\in\calE(n)$. Let $x_{[0,t]} \in \mathbb{R}^{n \times (t+1)}$ be a state compatible with this input-output data and system. The data Hankel matrices $H_{k,t}$ and $G_{k,t}$ are related to $\Omega_k$ and $\Theta_k$ as defined in Section~\ref{s:isosystems}, as well as the matrix of states $x_{[0,t]}$. More specifically, we have that
\bse
\label{e:hankel data from system and state}
\begin{align}
\label{relationHkt}
H_{k,t} &=\bbm
H_k(y_{[0,t-1]})\\
H_k(u_{[0,t-1]})
\ebm=\Phi_k
\bbm
\dx{0}{t-k-1}\\
H_k(u_{[0,t-1]})
\ebm\\
\label{relationGkt}
G_{k,t}&=\bbm H_{k-1}(y_{[0,t-2]})\\
H_{k}(u_{[0,t-1]})\ebm=
\Psi_k
\bbm
\dx{0}{t-k-1}\\
H_k(u_{[0,t-1]})
\ebm
\end{align}
\ese
where

\end{comment} 

\begin{proof}
We first prove \ref{lemmastopa}. Assume that $k \geq \lt$. Hypothesis~\ref{stopcond1} and Lemma~\ref{l:increaserank} imply that 
\begin{equation}
\label{rankHkttau}
\rank H_{k,t+\tau} = \rank H_{k,t} + \tau = t +\tau-k.
\end{equation}
Let $x_{[0,t+\tau-1]} \in \mathbb{R}^{\nt \times (t+\tau)}$ be a state compatible with the input-output data $(u_{[0,t+\tau-1]},y_{[0,t+\tau-1]})$ and the true system, i.e., \eqref{stateseqtruesys} holds. Since $(\tc,\ta)$ is observable and $k \geq \lt$, the observability matrix $\Omega_{k-1}$ of the true system has rank $\nt$. This implies that the matrices
\beq\label{e:Phi and Psi}
\Phi_{k-1} := \bbm
\Omega_{k-1}&\Theta_{k-1} \\
0&I
\ebm\text{ and }
\Psi_k :=
\bbm
\Omega_{k-1}&\Theta_{k-1} &0 \\
0&I&0\\
0&0&I_m
\ebm
\eeq
have full column rank, where we recall that $\Theta_{k-1}$ is the Toeplitz matrix of Markov parameters of the true system, defined in \eqref{toepmatrix}. Therefore,
$$
\rank H_{k-1,t+\tau} = \rank \left( \Phi_{k-1}
\begin{bmatrix}
x_{[0,t+\tau-k]} \\ H_{k-1}(u_{[0,t+\tau-1]})
\end{bmatrix} \right) = \rank \begin{bmatrix}
x_{[0,t+\tau-k]} \\ H_{k-1}(u_{[0,t+\tau-1]})
\end{bmatrix}.
$$
Moreover,
\begin{align}
\label{rkGktau}
\rank G_{k,t+\tau} &= \rank \left( \Psi_k \begin{bmatrix}
x_{[0,t+\tau-k-1]} \\ H_{k}(u_{[0,t+\tau-1]}) \end{bmatrix} \right) = \rank \begin{bmatrix}
x_{[0,t+\tau-k-1]} \\ H_{k}(u_{[0,t+\tau-1]})
\end{bmatrix} \\
\label{rkHktau}
&= \rank H_{k,t+\tau}.
\end{align}
By hypothesis \ref{stopcond2}, we thus have
$$
\rank \begin{bmatrix}
x_{[0,t+\tau-k-1]} \\ H_{k}(u_{[0,t+\tau-1]})
\end{bmatrix} = m + \rank \begin{bmatrix}
x_{[0,t+\tau-k]} \\ H_{k-1}(u_{[0,t+\tau-1]})
\end{bmatrix}.
$$
By \cite[Lemma 23]{CamlibelRapisarda2024} and the fact that $(\ta,\tb)$ is controllable, 
$$
\rank \begin{bmatrix}
x_{[0,t+\tau-k-1]} \\ H_{k}(u_{[0,t+\tau-1]})
\end{bmatrix} = \nt + (k+1)m.
$$
Therefore, by \eqref{rkHktau}, we conclude that 
\begin{equation} 
\label{formularankHk}
\rank H_{k,t+\tau} = \nt + (k+1)m,
\end{equation}
proving the first item of \ref{lemmastopa}. The second item of \ref{lemmastopa} now follows immediately from \eqref{rankHkttau}. Finally, to prove the third item, let 
$$
\begin{bmatrix}
A & B \\ C & D
\end{bmatrix} \in \mathcal{E}(\ell_{\mathrm{min},t+\tau},n_{\mathrm{min},t+\tau}).
$$
Let $x_{[0,t+\tau-1]} \in \mathbb{R}^{n_{\mathrm{min},t+\tau} \times (t+\tau)}$ be a state compatible with the input-output data and the above explaining system. Obviously, $\rank H_{k,t+\tau} \leq n_{\mathrm{min},t+\tau} + (k+1)m$ and therefore $n_{\mathrm{min},t+\tau} \geq \nt$ by \eqref{formularankHk}. However, since also $n_{\mathrm{min},t+\tau} \leq \nt$, we obtain $\nt = n_{\mathrm{min},t+\tau}$. Finally, it follows from \cite[Equation (17)]{CamlibelRapisarda2024} that $\ell_{\mathrm{min},t+\tau} \geq \lt$. Since obviously $\ell_{\mathrm{min},t+\tau} \leq \lt$, we conclude that $\ell_{\mathrm{min},t+\tau} = \lt$, proving the third item of \ref{lemmastopa}.

Next, we will prove \ref{lemmastopb}. Assume that $k = L_{t+\tau}^a$. We have that 
$$
N - n_{\mathrm{min},t+\tau} + \ell_{\mathrm{min},t+\tau} \geq \nt - n_{\mathrm{min},t+\tau} + \ell_{\mathrm{min},t+\tau} \geq \lt,
$$
where the last inequality follows from \cite[Equation (17)]{CamlibelRapisarda2024}. Combining this with $L \geq \lt$, we obtain $k = L_{t+\tau}^a \geq \lt$, by definition of $L_{t+\tau}^a$. Therefore, the three items listed under \ref{lemmastopa} hold. From the fact that $\ell_{\mathrm{min},t+\tau} = \lt$ and $n_{\mathrm{min},t+\tau} = \nt$, it follows that $L_{t+\tau}^a = L^a$, and therefore $k = L^a$. This proves the first item of \ref{lemmastopb}. Moreover, the second item of \ref{lemmastopa} implies that $t + \tau = L^a + (L^a + 1)m + \nt$, which shows that $t + \tau = T$, proving the second item of \ref{lemmastopb}. Finally, from the third item of \ref{lemmastopa}, we see that $\rank H_{L_T^a,T} = (L^a_T+1)m + n_{\mathrm{min},T}$. Therefore, it follows from Theorem~\ref{t:main} that the data $(u_{[0,T-1]},y_{[0,T-1]})$ are informative for system identification. Therefore, also the third item of \ref{lemmastopb} holds. This proves the lemma. 
\end{proof}

The core idea of our approach is to adapt the depth $k$ of the Hankel matrix during the operation of the experiment design procedure. For a \emph{fixed} depth $k$, Lemma~\ref{l:increaserank} will be used for $s \in [t,t+\tau-1]$ until the rank condition $\rank G_{k,t+\tau} = m + \rank H_{k-1,t+\tau}$ holds. Then, following Lemma~\ref{l:stoppingcriterion}, we check whether $k = L_{t+\tau}^a$. If $k = L_{t+\tau}^a$ then we are done because the data $(u_{[0,t+\tau-1]},y_{[0,t+\tau-1]})$ are informative for system identification. Otherwise, if $k \neq L_{t+\tau}^a$ we increase the depth of the Hankel matrix to $k+1$ and repeat the process. This leads to the following algorithm.

\begin{algorithmic}[1]
\Procedure{OnlineExperiment}{$L,N$}
\State {\bf choose} $\du{0}{m-1}$ to be nonsingular \label{alg:nonsingular}
\State {\bf measure} outputs $y_{[0,m-1]}$
\State $t \gets m, k \gets 0$
\While {\text{$k \neq L_t^a$}}\Comment{stopping criterion} \label{alg:outerwhile}
\State $k \gets k+1$ \label{alg:increasek}
\If{\text{$ t = k$}} \label{alg:if}
\State {\bf choose} $u(t)$ arbitrarily \label{alg:inputsarbitrary}
\State {\bf measure} output $y(t)$ \label{alg:outputstoarbinputs}
\State {$t \gets t+1$}
\EndIf \label{alg:endif}
\While {\text{$ \rank G_{k,t} < m+\rank H_{k-1,t}$}} \label{alg:whileGH}
\State {\bf choose} $u(t) \not\in \calA_t$ \label{alg:unotinAt} 
\State {\bf measure} output $y(t)$
\Comment{$\rank H_{k,t+1} = \rank H_{k,t}+1$}
\State $t\gets t+1$
\EndWhile \label{alg:endwhile}
\EndWhile \label{alg:endouterwhile}
\State \textbf{return} $(\du{0}{t-1},\dy{0}{t-1})$ \\
\Comment{$ ({k},{t})=(L^a,T) $ and the data are informative}
\EndProcedure
\end{algorithmic}

\noindent 
The following theorem asserts that \textsc{OnlineExperiment}$(L,N)$ leads to informative data sets with the least possible number of samples. This is the main result of the paper.
\begin{theorem}
\label{t:mainresult}
The procedure \textsc{OnlineExperiment}$(L,N)$ returns input-output data $(\du{0}{t-1},\dy{0}{t-1})$ that are informative for system identification. Moreover, $t = T$, where $T$ is defined in \eqref{defT}.
\end{theorem}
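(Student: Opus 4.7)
The plan is to establish a full-column-rank invariant on $H_{k,t}$, prove termination of both loops, and then close the argument by applying Lemma~\ref{l:stoppingcriterion}(b) at exit.

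I will first show by induction on outer iterations that $H_{k,t}$ has full column rank whenever the outer while condition (line~\ref{alg:outerwhile}) is evaluated. The base case is $H_{0,m}$: its bottom $m$ rows form the invertible matrix $u_{[0,m-1]}$ chosen in line~\ref{alg:nonsingular}, so all $m$ columns are independent. For the inductive step, the increment $k \mapsto k+1$ either occurs with $t > k+1$ (in which case $H_{k+1,t}$ is obtained from $H_{k,t}$ by deleting one column and appending rows, preserving full column rank), or triggers the branch in lines~\ref{alg:if}--\ref{alg:endif}, after which the single column of $H_{k,k+1}$ contains the entry $u(0)$, which is nonzero as a column of the invertible $u_{[0,m-1]}$. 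Each inner-loop iteration then preserves the invariant by Lemma~\ref{l:increaserank}, which guarantees $\rank H_{k,t+1} = \rank H_{k,t}+1$ whenever $u(t) \notin \calA_t$. As an immediate consequence, the inner loop terminates: every column of $H_{k,t}$ is a length-$(k+1)$ input-output trajectory of the true system, so $\rank H_{k,t} \leq (k+1)m + \nt$, while each iteration strictly increases that rank.

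For termination of the outer loop I will prove the second invariant $k \leq L_t^a$ at each outer check. Because $N - \nmt + \lmt$ upper-bounds the lag of any explaining system with at most $N$ states \cite[Theorems 6(b) and 8]{CamlibelRapisarda2024} and the true system is such a system, one has $\lt \leq N - \nmt + \lmt$, hence $L_t^a \geq \lt$. In the inductive step, $k+1 \leq \lt \leq L_{t'}^a$ is automatic when $k < \lt$. Once $k \geq \lt$ is reached, Lemma~\ref{l:stoppingcriterion}(a) applied at the exit of the preceding inner loop yields $\nmt = \nt$ and $\lmt = \lt$, which freezes $L_t^a$ at $L^a$; the strict inequality $k < L_t^a$ that triggered the previous outer iteration then propagates to $k+1 \leq L^a = L_{t'}^a$ at the next outer check. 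Since $k$ strictly increases at every outer iteration and is bounded by $L^a$, the outer loop terminates.

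When the outer loop exits, $k = L_t^a$ by construction. The invariants established above supply precisely the hypotheses of Lemma~\ref{l:stoppingcriterion} for the final inner-loop pass: $H_{k,t}$ had full column rank at the start of that pass, the loop body witnessed $\rank G_{k,s} < m + \rank H_{k-1,s}$ with $u(s) \notin \calA_s$ for every intermediate $s$, and the exit gives $\rank G_{k,t} = m + \rank H_{k-1,t}$. Part~(b) of that lemma then yields $k = L^a$, $t = T$, and informativity of $(u_{[0,T-1]}, y_{[0,T-1]})$ for system identification, finishing the proof. The main obstacle I foresee is the second paragraph: the depth $k$ is updated only in the outer loop while $L_t^a$ evolves only when the inner loop adds samples, so the two progress on different schedules and it is not a priori clear that $k$ cannot overshoot $L_t^a$. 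Aligning the two schedules hinges on the ``locking'' that Lemma~\ref{l:stoppingcriterion}(a) provides once $k \geq \lt$, together with the lower bound $L_t^a \geq \lt$ from \cite{CamlibelRapisarda2024}.
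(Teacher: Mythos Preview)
Your proposal is correct and follows the same three-step structure as the paper: establish the full-column-rank invariant on $H_{k,t}$, prove termination of both loops (using $L_t^a \geq \lt$ together with Lemma~\ref{l:stoppingcriterion}\ref{lemmastopa} to lock $L_t^a$ at $L^a$ once $k\geq\lt$), and then invoke Lemma~\ref{l:stoppingcriterion}\ref{lemmastopb} at exit. The only cosmetic differences are that you phrase outer-loop termination via the invariant $k\leq L_t^a$ whereas the paper argues directly that $L_{t_k}^a=L^a$ for all $k\geq\lt$, and you handle the depth increase by deleting the last column of $H_{k,t}$ rather than the first (which the paper isolates as the separate Lemma~\ref{l:increasedepth}).
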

Before we prove Theorem~\ref{t:mainresult}, we state the following auxiliary lemma. 

\begin{lemma}
\label{l:increasedepth}
Let $k \geq 0$ and $t \geq k+2$. If the data $(u_{[0,t-1]},y_{[0,t-1]})$ are such that $H_{k,t}$ has full column rank then also $H_{k+1,t}$ has full column rank. 
\end{lemma}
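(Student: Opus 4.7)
The plan is to reduce the statement to a direct structural comparison between $H_{k,t}$ and $H_{k+1,t}$. First I would introduce $\tilde H_{k,t}$, defined as the submatrix consisting of the first $t-k-1$ columns of $H_{k,t}$. Because $t \geq k+2$, this submatrix is nonvoid, and because $H_{k,t}$ has full column rank by hypothesis, any subset of its columns is linearly independent; in particular $\tilde H_{k,t}$ has full column rank $t-k-1$.

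Next I would verify, directly from the Hankel definitions, that $\tilde H_{k,t}$ sits inside $H_{k+1,t}$ as a row-submatrix (up to permutation of rows). Concretely, for each $j \in [0,t-k-2]$, column $j$ of $\tilde H_{k,t}$ lists the entries $y(j),y(j+1),\dots,y(j+k),u(j),u(j+1),\dots,u(j+k)$, while column $j$ of $H_{k+1,t}$ lists the same entries together with the two extra entries $y(j+k+1)$ and $u(j+k+1)$. Hence $H_{k+1,t}$ is obtained from $\tilde H_{k,t}$ by inserting the two block rows $y_{[k+1,t-1]}$ and $u_{[k+1,t-1]}$ at the appropriate positions.

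The conclusion is then immediate from the fact that adjoining rows to a matrix can only preserve or increase its rank. We get $\rank H_{k+1,t} \geq \rank \tilde H_{k,t} = t-k-1$, and since $H_{k+1,t}$ has precisely $t-k-1$ columns, it has full column rank, which is the claim.

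The only real obstacle is bookkeeping: one must be careful with the row-ordering convention used in the paper (all $y$-rows first, then all $u$-rows, rather than interleaved) so that the identification of $\tilde H_{k,t}$ as a row-submatrix of $H_{k+1,t}$ is unambiguous. Once this is in place, the argument is a one-line application of the monotonicity of rank under row insertion, without any appeal to the system-theoretic content of the data.
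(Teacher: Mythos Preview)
Your proof is correct and follows essentially the same approach as the paper's: both identify a column-submatrix of $H_{k,t}$ (necessarily of full column rank) that sits inside $H_{k+1,t}$ as a row-submatrix, and conclude via monotonicity of rank under row insertion. The only cosmetic difference is that you drop the \emph{last} column of $H_{k,t}$ and recover the missing rows as $y_{[k+1,t-1]}$ and $u_{[k+1,t-1]}$, whereas the paper drops the \emph{first} column and recovers the missing rows as $y_{[0,t-k-2]}$ and $u_{[0,t-k-2]}$; these are symmetric variants of the same one-line argument.
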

\begin{proof}
Since $H_{k,t}$ has full column rank, the submatrix
\begin{equation}
\label{submatrixHkt}
\begin{bmatrix} 
y(1)&\cdots&y(t-k-1)\\
\vdots&&\vdots\\
y(k+1)&\cdots&y(t-1)\\
u(1)&\cdots&u(t-k-1)\\
\vdots&&\vdots\\
u(k+1)&\cdots&u(t-1)
\end{bmatrix},
\end{equation}
obtained from removing the first column of $H_{k,t}$, has full column rank as well. Since \eqref{submatrixHkt} is also the submatrix of $H_{k+1,t}$ obtained by removing the row blocks $u_{[0,t-k-2]}$ and $y_{[0,t-k-2]}$, we conclude that $H_{k+1,t}$ has full column rank. This proves the lemma. 
\end{proof}

\begin{proof}[Proof of Theorem~\ref{t:mainresult}]
The proof consists of the following three steps. First, we prove that the Hankel matrix $H_{k,t}$ always has full column rank at the start of the while loop in Line~\ref{alg:whileGH}. Secondly, we prove that the procedure terminates within a finite number of steps. Finally, we show that the latter number is precisely equal to $T$, and the data $(u_{[0,T-1]},y_{[0,T-1]})$  are informative for system identification. 

We begin with the first step. Consider the $k$-th iteration of the while loop in Lines~\ref{alg:outerwhile}-\ref{alg:endouterwhile}. Let $t_k$ be the time instant at the start of the while loop in Line~\ref{alg:whileGH}. We claim that $H_{k,t_k}$ has full column rank. 

We first prove this claim for the first iteration of the while loop, i.e., consider $k=1$. If $m \geq 2$ then the if statement in Lines~\ref{alg:if}-\ref{alg:endif} is ignored, and $t_1 = m$. Note that the Hankel matrix $H_{0,m}$ has full column rank by the choice of the inputs $u_{[0,m-1]}$ in Line~\ref{alg:nonsingular}. It is then clear that $H_{1,t_1}$ has full column rank by Lemma~\ref{l:increasedepth}. On the other hand, if $m = 1$, then $t_1 = m+1$. In this case, the Hankel matrix $H_{1,m+1}$ has one column, which is nonzero due to line~\ref{alg:nonsingular}. Therefore, also in this case $H_{1,t_1}$ has full column rank.

Now consider any iteration $k \geq 1$ of the while loop in Lines~\ref{alg:outerwhile}-\ref{alg:endouterwhile}. Assume that $H_{k,t_k}$ has full column rank. Our goal is to show that $H_{k+1,t_{k+1}}$ has full column rank as well.

Consider the $k$-th iteration of the while loop in Lines~\ref{alg:outerwhile}-\ref{alg:endouterwhile}. Since $H_{k,t_k}$ has full column rank by hypothesis, we can apply Lemma~\ref{l:increaserank} to the while loop in Lines~\ref{alg:whileGH}-\ref{alg:endwhile}. In particular, this while loop is applied for a finite number of iterations, say $\tau \in \mathbb{N}$, which yields the Hankel matrix $H_{k,t_k+\tau}$. By repeated application of Lemma~\ref{l:increaserank}, $H_{k,t_k+\tau}$ has full column rank. This means, in particular, that $t_k + \tau \geq k+1$.

Now, we turn our attention to the $(k+1)$-th iteration of the while loop in Lines~\ref{alg:outerwhile}-\ref{alg:endouterwhile}. If $t_k+\tau = k+1$ then the if statement in Lines~\ref{alg:if}-\ref{alg:endif} generates an arbitrary input $u(t_k+\tau)$ and corresponding output $y(t_k+\tau)$. In this case, $t_{k+1} = t_k+\tau+1$ and the resulting Hankel matrix $H_{k+1,t_{k+1}}$ is a column vector of rank one by Line~\ref{alg:nonsingular}. In the other case, if $t_k + \tau > k+1$ then $t_{k+1} = t_k+\tau$ and it follows that $H_{k+1,t_{k+1}}$ has full column rank by Lemma~\ref{l:increasedepth}.

Secondly, we prove that the procedure terminates in a finite number of steps. Now, let $t_k \in \mathbb{N}$ be the time instant, corresponding to the depth $k$, at which the stopping criterion in Line~\ref{alg:outerwhile} is checked. We want to prove the existence of a depth $k \geq 0$ such that $k = L_{t_k}^a$. Clearly, since $L_{t_k}^a \geq \lt$, this cannot happen if $k < \lt$. For any $k \geq \lt$ we have that $\ell_{\mathrm{min},t_k} = \lt$ and $n_{\mathrm{min},t_k} = \nt$ by Lemma~\ref{l:stoppingcriterion}\ref{lemmastopa}, meaning that $L_{t_k}^a = L^a$. Since the depth $k$ is increased by one in every iteration of the while loop in Lines~\ref{alg:outerwhile}-\ref{alg:endouterwhile}, this implies that there exists $k \geq \lt$ such that $k = L_{t_k}^a$. 

Finally, we prove the last step. Let $k$ be such that $k = L_{t_k}^a$. It follows from Lemma~\ref{l:stoppingcriterion}\ref{lemmastopb} that $t_k$ is precisely equal to $T$, and the data $(u_{[0,T-1]},y_{[0,T-1]})$ are informative for system identification. This proves the theorem. 
\end{proof}

\begin{remark}
Without going into details, we mention that Theorem~\ref{t:mainresult} shows that ``randomly" chosen inputs $u_{[0,T-1]}$ lead to informative experiments of length precisely $T$ with high probability. In fact, the only imposed constraints on the inputs are that $u_{[0,m-1]}$ is nonsingular (Line~\ref{alg:nonsingular} of the algorithm), and that $u(t) \in \mathbb{R}^m$ is not a member of an $(m-1)$-dimensional affine set (Line~\ref{alg:unotinAt}). Regardless of how the inputs are chosen to satisfy these constraints, however, a crucial aspect of \textsc{OnlineExperiment}$(L,N)$ is its stopping criterion. Indeed, we emphasize that $T$ is not a priori known but has to be deduced from data. 
\end{remark} 

\section{Illustrative example}
\label{sec:example}
In this section we illustrate \textsc{OnlineExperiment}$(L,N)$ by means of an example. Consider the true system 
\[
\struebig=
\left[
\begin{array}{ccc|cc}
0 & 1 & 0 & 1 & 0\\
0 & 0 & 1 & 0 & 1\\
0 & 0 & 0 & 0 & 1\\
\hline
1 & 0 & 0 & 1 & 0\\
0 & 1 & 0 & 0 & 0
\end{array}
\right],
\]
so that $\nt=3$, $m=2$, $p=2$, and $\lt=2$. Let $x_0=\begin{bmatrix}
1 & 1 & 0
\end{bmatrix}^\top$. Moreover, let $L=4$ and $N=4$ be the available upper bounds.

\subsection{Online experiment}
We now apply \textsc{OnlineExperiment}$(L,N)$. We start with $k=0$. We choose $u_{[0,1]} = I$, which is obviously nonsingular, and measure
\[
y_{[0,1]}=
\begin{bmatrix}
2 & 2\\
1 & 0
\end{bmatrix}.
\]
Using these data and Proposition~\ref{p:nmin}, we compute
\[
\ell_{\mathrm{min},2}=0,\:\: n_{\mathrm{min},2}=0, \:\: \text{and }\: L_2^a=4.
\]
Since $k \neq L_2^a$, we increase $k$ to $k=1$. The inputs are now designed according to the while loop in Line~\ref{alg:whileGH} as:
\[
u_{[2,7]}=
\begin{bmatrix}
1  &   1  &   1  &   1  &   1  &   1\\
0  &   0  &   0  &   0  &   0  &   1
\end{bmatrix},
\]
resulting in the measured outputs
\[
y_{[2,7]}=
\begin{bmatrix}
1  &   3  &   3  &   2  &   2  &   2\\
1  &   1  &   0  &   0  &   0  &   0
\end{bmatrix}.
\]
It can be checked that we have increased the rank of the depth-$1$ Hankel matrix from $\rank H_{1,3}=2$ to $\rank H_{1,8}=7$. Based on the data thusfar, we use Proposition~\ref{p:nmin} to compute:
\[
\ell_{\mathrm{min},8}=2,\:\: n_{\mathrm{min},8}=3, \:\: \text{and }\: L_8^a=3.
\]
Since $k \neq L_8^a$, we set $k=2$. Following the while loop in Line~\ref{alg:whileGH}, we construct
\[
u_{[8,10]}=
\begin{bmatrix}
1  &   1  &   1\\
0  &   0  &   0
\end{bmatrix},
\]
which yields
\[
y_{[8,10]}=
\begin{bmatrix}
2  &  3  &   3\\
1  &  1  &   0
\end{bmatrix}.
\]
By doing so, we have increased the rank of the depth-$2$ Hankel matrix from $\rank H_{2,9}=7$ to $\rank H_{2,11}=9$. Again, we compute: 
\[
\ell_{\mathrm{min},11}=2,\:\: n_{\mathrm{min},11}=3, \:\: \text{and }\: L_{11}^a=3.
\]
Since $k \neq L_{11}^a$ we set $k=3$. This time, we apply the while loop in Line~\ref{alg:whileGH} to obtain the data
\[
u_{[11,13]}=
\begin{bmatrix}
1  &   0  &   0\\
0  &   1  &   1
\end{bmatrix}, \text{ and } 
y_{[11,13]}=
\begin{bmatrix}
2  &   1  &   0\\
0  &   0  &   1
\end{bmatrix}.
\]
The rank of the depth-$3$ Hankel matrix has increased from $\rank H_{3,12}=9$ to $\rank H_{3,14}=11$. Finally, we compute
\[
\ell_{\mathrm{min},14}=2,\:\: n_{\mathrm{min},14}=3, \:\: \text{and }\: L_{14}^a=3.
\]
Since $k = L_{14}^a$, the procedure terminates. We conclude that $T = 14$ and the data $(u_{[0,13]},y_{[0,13]})$ are informative for system identification. For this example, we note that the required number of samples $T$ is less than the experiment design approach in \cite{Waarde21} that works with the \emph{fixed} depth $L=4$ Hankel matrix. However, this is not always the case. For example, if we study the same example but with the given upper bounds $L = 3$ and $N = 6$, we can use \textsc{OnlineExperiment}$(L,N)$ to generate the same informative data, with the only difference that we now have $L_2^a = 3$. In this case, the number of $T =14$ data samples is the same as in \cite{Waarde21}.

\subsection{PE of order $L^a+1$ is not sufficient for informativity}
According to Theorem~\ref{t:main}, an obvious necessary condition for informativity is that the inputs are \emph{persistently exciting of order $L^a +1$}. This condition, however, is not sufficient as demonstrated next. We use the same example as above, but just change $u(12)$ from $\begin{bmatrix}
0\\1
\end{bmatrix}$ to $\begin{bmatrix}
1\\0
\end{bmatrix}$, i.e., we choose the inputs as
\[
u_{[0,13]}=
\begin{bmatrix}
1 &   0 &   1 &   1 &   1 &   1 &   1 &   1 &   1 &   1 &   1 &   1 &   1 &   0\\
0 &   1 &   0 &   0 &   0 &   0 &   0 &   1 &   0 &   0 &   0 &   0 &   0 &   1
\end{bmatrix}.
\]
The corresponding outputs are then given by
\[
y_{[0,13]}=
\begin{bmatrix}
2 &  2 &  1 &  3 &  3 &  2 &  2 &  2 &  2 &  3 &  3 &  2 &  2 &  1\\
1 &  0 &  1 &  1 &  0 &  0 &  0 &  0 &  1 &  1 &  0 &  0 &  0 &  0
\end{bmatrix}.
\]
In this case, $\rank H_3(u_{[0,13]})=8$ so $u_{[0,13]}$ is persistently exciting of order $L^a+1$. However $\rank H_{3,14}=10\neq 11$ so the conditions of Theorem~\ref{t:main} are not satisfied.

\section{Comparison to previous work}
\label{sec:comparison}

The proposed online experiment design method largely improves the (offline) persistency of excitation condition of the fundamental lemma \citep{Willems05} (see Proposition~\ref{p:fl}). Indeed, recall that the input $u_{[0,t-1]}$ can only be \emph{persistently exciting} of order $N+L+1$ if $t \geq N+L+m(N+L+1)$. In general, this lower bound on the number of data samples is much larger than $T$ in \eqref{defT}. For example, if $m = 80$, $p = 10$, $\lt = 20$, $\nt = 100$, $L = 100$ and $N = 150$, the online experiment design method requires $T = 5850$ samples whereas persistency of excitation requires $t \geq 20330$ samples. 

The proposed approach also improves the online experiment design of \citep{Waarde21}. In fact, in the latter paper a method was given to guarantee that the Hankel matrix $H_L$ of \emph{fixed depth} $L$ has rank $(L+1)m+\nt$. This was done in the least possible number of time steps, $t = L +(L+1)m +\nt$. However, by Theorem~\ref{t:main}, the condition $\rank H_L = (L+1)m+\nt$ is sufficient for informativity for system identification, but in general not necessary. In particular, if $L^a < L$ then the experiment design method of this paper leads to a shorter experiment for system identification than the one provided in \citep{Waarde21}. If $L^a = L$, then the number of samples coincides with \citep{Waarde21}.

\section{Conclusions}
\label{sec:conclusions}

In this paper we have proposed an experiment design method that leads to input-output data that are informative for system identification. The key features of the approach are i) it is \emph{online}, meaning that the design of the inputs is guided by data collected at previous time steps, and ii) it \emph{adapts the depth} of the input-output Hankel matrix during the operation of the algorithm. We have shown that this approach leads to informative sequences of input-output samples of the shortest possible length. Interestingly, the exact number of samples in such a shortest experiment cannot be determined a priori, but is only revealed after the termination of the procedure. The online experiment design method improves over methods based on persistent excitation \cite{Willems05} by significantly reducing the required number of data samples for system identification. The results of this paper have also revealed that the online experiment design method of \cite{Waarde21} yields the shortest experiment for system identification only in \emph{some} cases. In situations where the data-based bound on the lag of the system is smaller than the a priori given bound, the experiment design method of this paper outperforms \cite{Waarde21}.

\bibliographystyle{plain}
\bibliography{ifacconf}
\end{document}